\documentclass[12pt, a4paper]{amsart}
\usepackage{graphicx}
\usepackage{setspace}
\usepackage{tikz}
\usetikzlibrary{patterns}
\usepackage{csquotes}
\usepackage{float}          
\usepackage[dvipsnames]{xcolor} 
\usepackage{tikz}
\usetikzlibrary{calc}       
\usepackage{tkz-euclide}    
\definecolor{darkred}{RGB}{145,0,0}
\definecolor{darkgreen}{RGB}{0,120,0}

\usepackage{enumitem}
\usepackage{amssymb}
\usepackage{listings}
\usepackage{xcolor,etoolbox}
\usepackage[
    colorlinks=true,
    linkcolor=darkred,
    urlcolor=darkgreen,
    citecolor=blue
]{hyperref}
\usepackage{doi}
\usepackage{orcidlink}

\theoremstyle{definition}
\newtheorem{theorem}{Theorem}[section]
\numberwithin{theorem}{section}
\newtheorem{definition}[theorem]{Definition}

\newtheorem{proposition}[theorem]{Proposition}

\title[A Modular Form Proof of the Irrationality of $\zeta\left(3\right)$]
 {A Modular Form Proof of the Irrationality of $\zeta\left(3\right)$} 
 \author{Pang Ern Thang \orcidlink{0009-0005-3447-4188}}
\address{Department of Mathematics, National University of Singapore, 10 Lower Kent Ridge Road, Singapore 119076}
\email{e1398618@u.nus.edu}

\begin{document}
\maketitle \begin{abstract}
We present an expository proof of the irrationality of $\zeta\left(3\right)$ using modular forms of level 6. By constructing a suitable Eichler integral, we obtain a power series with controlled denominators and sufficiently large radius of convergence. Beukers' irrationality criterion then implies that $\zeta\left(3\right)$ is irrational.
\end{abstract}
\section{Introduction}
Let \[\zeta\left(s\right)=\sum_{n=1}^{\infty}\frac{1}{n^s}\]
denote the Riemann zeta function and $\mathbb{N}=\left\{1,2,\ldots\right\}$ denote the set of natural numbers. For any positive real number $s>1$, $\zeta\left(s\right)$ can provide the sum of various convergent infinite series, such as $\zeta\left(2\right)$. In 1740, Euler evaluated the zeta function at even positive integers and deduced that for any $n\in\mathbb{N}$, \[\zeta\left(2n\right)=\left(-1\right)^{n+1}\frac{B_{2n}\left(2\pi\right)^{2n}}{2\left(2n\right)!},\]
where $B_n$ denotes the sequence of Bernoulli numbers \cite{Apostol1973}. It is known that $B_n\in\mathbb{Q}$ for all $n\in\mathbb{N}$ so $\zeta\left(2n\right)$ is indeed irrational. As for the odd positive integers, no such simple expression is known. Having said that, it is of interest whether $\zeta\left(2n+1\right)$ is irrational, though it is conjectured that it is for all $n\in\mathbb{N}$.

In 1978, Apéry showed that $\zeta\left(3\right)$ is irrational \cite{Beukers1979}. This constant is also known as Apéry's constant. Many people have tried to extend Apéry's proof that $\zeta\left(3\right)$ is irrational to other values of $\zeta\left(s\right)$ with odd arguments. Even though this has so far not produced any results on other numbers, in 2001, Ball and Rivoal showed that infinitely many of the $\zeta\left(2n+1\right)$ are irrational \cite{BallRivoal2001}, and in 2002, Zudilin showed that at least one of $\zeta\left(5\right)$, $\zeta\left(7\right)$, $\zeta\left(9\right)$, $\zeta\left(11\right)$ is irrational \cite{Zudilin2001}. 

In this paper, we will see that Apéry's proof of $\zeta\left(3\right)$ is a nice consequence of complex analysis on spaces of certain modular forms. Having said that, it is difficult to determine the irrationality of other odd values of $\zeta\left(s\right)$ using modular forms.

We first state some preliminaries before delving into the main parts of the proof. First, define \[t\left(q\right)=\sum_{n=0}^{\infty}t_nq^n\] to be a convergent power series in $q$ for all $\left|q\right|<1$. Let $w\left(q\right)$ be another analytic function on $\left|q\right|<1$. We like to study $w$ as a function of $t$. In general, it will be a multivalued function over which we have no control. However, we can make some assumptions. Suppose $t_0=0$ and $t_1\ne 0$. Let $q\left(t\right)$ denote the local inverse of $t\left(q\right)$ with $q\left(0\right)=0$. Let $w\left(q\left(t\right)\right)$ denote the value of $w$ around $t=0$. To determine the radius of convergence of the power series \[w\left(q\left(t\right)\right)=\sum_{n=0}^{\infty}w_nt^n,\]
we introduce \emph{branching values} of $t$.
\begin{definition}[branching value] Let
\[
t:\mathbb{D}\rightarrow\mathbb{C}
\quad\text{where}\quad \mathbb{D}=\left\{q\in\mathbb{C}:\left|q\right|<1\right\}
\]
be a non-constant holomorphic function. We say that \(t\) \emph{branches above} \(t_0\in\mathbb{C}\) if either
\begin{itemize}
\itemsep 0em
    \item \(t_0\notin t\left(\mathbb{D}\right)\), or
    \item there exists \(q_0\in\mathbb{D}\) such that $t\left(q_0\right)=t_0$ and $t'\left(q_0\right)=0$.
\end{itemize}
Equivalently, \(t\) branches above \(t_0\) if \(t\) is not a local covering above \(t_0\). Such a point \(t_0\) is called a \emph{branching value} of \(t\).
\end{definition}
Assume that the branching values of \(t\) form a discrete set $t_1,t_2,\ldots$, with \(0\) excluded, and order them so that $\left|t_1\right|<\left|t_2\right|<\cdots$. In general, the nearest branching value \(t_1\) determines the radius of convergence of \(w\left(q\left(t\right)\right)\), which is therefore typically \(\left|t_1\right|\). We are interested in situations where the function extends beyond this first obstruction.

Let \(\gamma\) be a closed contour in the complex \(t\)-plane based at the origin. Suppose that \(\gamma\) avoids all branching values and winds exactly once around \(t_1\). If analytic continuation of \(w\left(q\left(t\right)\right)\) along \(\gamma\) returns to the original branch, then the function admits analytic continuation throughout $\left|t\right|<\left|t_2\right|$ apart from a possible isolated singularity at \(t_1\). If, moreover, \(w\left(q\left(t\right)\right)\) remains bounded in a punctured neighbourhood of \(t_1\), then this singularity is removable. Consequently, the Taylor series of \(w\left(q\left(t\right)\right)\) about \(t=0\) has radius of convergence at least \(\left|t_2\right|\).

The irrationality arguments considered below are based on constructing examples for which this enlargement of the radius of convergence occurs. Proposition \ref{prop: Beukers' irrationality criterion} provides a systematic method for obtaining a radius of convergence that is as large as possible.
\begin{proposition}[Beukers' irrationality criterion]\label{prop: Beukers' irrationality criterion}
Let $f_0\left(t\right),f_1\left(t\right),\ldots,f_k\left(t\right)$ be power series in $t$. Suppose for any $n\in\mathbb{N}$ and $i=0,1,\ldots,k$, the $n^\text{th}$ coefficient in the Taylor series of $f_i$ is rational and has denominator dividing $d^n\left(\operatorname{lcm}\left(1,\ldots,n\right)\right)^r$, where $r,d\in\mathbb{N}$ are fixed. Suppose there exist $\theta_1,\ldots,\theta_k\in\mathbb{R}$ such that $f_0\left(t\right)+\theta_1f_1\left(t\right)+\cdots+\theta_kf_k\left(t\right)$ has radius of convergence $\rho$ and infinitely many non-zero Taylor coefficients. If $\rho>de^r$, then at least one of $\theta_1,\ldots,\theta_k$ is irrational.
\end{proposition}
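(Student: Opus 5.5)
We argue by contradiction: assume every $\theta_i$ is rational and deduce that the radius of convergence $\rho$ of
\[F\left(t\right)=f_0\left(t\right)+\theta_1f_1\left(t\right)+\cdots+\theta_kf_k\left(t\right)=\sum_{n\ge 0}a_nt^n\]
cannot exceed $de^r$. Write $f_i\left(t\right)=\sum_n c_{i,n}t^n$ and $\theta_i=p_i/D$ with a common denominator $D\in\mathbb{N}$ (set $\theta_0=1$). Put $L_n=\operatorname{lcm}\left(1,\ldots,n\right)$. By hypothesis $d^nL_n^rc_{i,n}\in\mathbb{Z}$ for every $i$ and $n\ge1$. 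Hence
\[D\,d^nL_n^r\,a_n=\sum_{i=0}^k D\theta_i\cdot d^nL_n^rc_{i,n}\in\mathbb{Z}.\]
Whenever $a_n\neq0$ this integer is nonzero, so
\[\left|a_n\right|\ge \frac{1}{D\,d^n L_n^r}.\]

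Next we bound $L_n$ from above. The exponent of a prime $p$ in $L_n$ is $\lfloor\log n/\log p\rfloor$, so $\log L_n=\psi\left(n\right)=\sum_{p^m\le n}\log p$, which is Chebyshev's function. By the prime number theorem $\psi\left(n\right)=n+o\left(n\right)$, so $L_n=e^{n+o\left(n\right)}$. (This is the one nontrivial analytic input. The elementary bound $L_n\le 4^n$ would only prove the weaker statement with $e$ replaced by $4$, so the PNT is genuinely needed for the constant $e^r$. I would cite it rather than prove it.) It follows that for every $\varepsilon>0$ and all large $n$ with $a_n\ne0$,
\[\left|a_n\right|^{1/n}\ge \frac{1}{D^{1/n}\,d\,e^{r\left(1+\varepsilon\right)}}.\]

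Finally we compare with the Cauchy–Hadamard formula $1/\rho=\limsup_n\left|a_n\right|^{1/n}$. Since infinitely many $a_n$ are nonzero, the bound above holds along an infinite subsequence. Letting $n\to\infty$ along it and then $\varepsilon\to0$ gives $\limsup\left|a_n\right|^{1/n}\ge 1/\left(de^r\right)$, that is, $\rho\le de^r$. This contradicts $\rho>de^r$, so some $\theta_i$ is irrational.

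The main point needing care is the asymptotic $L_n=e^{n\left(1+o\left(1\right)\right)}$. One small detail: the $n=0$ coefficients are not controlled by the hypothesis, but a single coefficient does not affect the $\limsup$, so this causes no difficulty.
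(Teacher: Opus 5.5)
Your proof is correct and is essentially the paper's argument. Both use the integrality of $D\,d^nL_n^r a_n$, the prime number theorem bound $L_n<e^{(1+\varepsilon)n}$, and Cauchy--Hadamard. The only difference is that you read off a lower bound $\limsup|a_n|^{1/n}\ge 1/(de^r)$ along the nonzero terms, whereas the paper bounds the integer above by $D\bigl(de^{(1+\varepsilon)r}/(\rho-\varepsilon)\bigr)^n\to 0$, which forces it to vanish eventually.
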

\begin{proof}
Choose $\varepsilon>0$ such that $\rho-\varepsilon>de^{r\left(1+\varepsilon\right)}$. Let \[f_i\left(t\right)=\sum_{n=0}^{\infty}a_{in}t^n.\]
Since the radius of convergence of $f_0\left(t\right)+\theta_1f_1\left(t\right)+\cdots+\theta_kf_k\left(t\right)$ is $\rho$, then by the Cauchy-Hadamard formula,
\[\frac{1}{\rho}=\limsup_{n\to\infty}\left|c_n\right|^{1/n}\quad\text{where }c_n=a_{0n}+a_{1n}\theta_1+\cdots+a_{kn}\theta_k.\]
So, for sufficiently large $n$, \[\left|a_{0n}+a_{1n}\theta_1+\cdots+a_{kn}\theta_k\right|<\frac{1}{\left(\rho-\varepsilon\right)^n}.\]
Suppose $\theta_1,\ldots,\theta_k\in\mathbb{Q}$ and have common denominator $D$. Then, \[\mathbb{Z}\ni A_n=Dd^n\operatorname{lcm}\left(1,\ldots,n\right)^r\left|a_{0n}+a_{1n}\theta_1+\cdots+a_{kn}\theta_k\right|<\frac{Dd^n\left(\operatorname{lcm}\left(1,\ldots,n\right)\right)^r}{\left(\rho-\varepsilon\right)^n}.\]
Note that $\operatorname{lcm}\left(1,\ldots,n\right)=e^{\psi\left(n\right)}$, where $\psi$ denotes the Chebyshev psi function. By the prime number theorem, $\psi\left(n\right)\sim n$ so $\operatorname{lcm}\left(1,\ldots,n\right)=e^{n+o\left(n\right)}$ \cite{Apostol1976}. Hence, $\operatorname{lcm}\left(1,\ldots,n\right)<e^{\left(1+\varepsilon\right)n}$ for sufficiently large $n$. This implies \[\left|A_n\right|<D\left(\frac{de^{\left(1+\varepsilon\right)r}}{\rho-\varepsilon}\right)^n.\]
Since \[\frac{de^{\left(1+\varepsilon\right)r}}{\rho-\varepsilon}<1,\]
then $A_n=0$ for sufficiently large $n$. However, this contradicts our assumption that $A_n\ne 0$ for infinitely many $n$.
\end{proof}
Next, we would need to construct $t\left(q\right)$ and $w\left(q\right)$. We will do so via modular forms. The values for which we obtain irrationality results are in fact values at integral points of Dirichlet series associated to modular forms.
\begin{proposition}\label{prop: h formula in terms of d}
Let $q=e^{2\pi i\tau}$ and \[F\left(\tau\right)=\sum_{n=1}^{\infty}a_nq^n\] be a Fourier series convergent for $\left|q\right|<1$ such that for some $k,N\in\mathbb{N}$ and $\varepsilon\in\left\{-1,1\right\}$, \[F\left(-\frac{1}{N\tau}\right)=\varepsilon\left(-i\tau\sqrt{N}\right)^kF\left(\tau\right).\] Let \[f\left(\tau\right)=\sum_{n=1}^{\infty}\frac{a_n}{n^{k-1}}q^n.\] Let \[L\left(F,s\right)=\sum_{n=1}^{\infty}\frac{a_n}{n^s}\] be the associated Dirichlet series and define \[h\left(\tau\right)=f\left(\tau\right)-\sum_{0\le r<\frac{k-2}{2}}\frac{L\left(F,k-r-1\right)}{r!}\left(2\pi i\tau\right)^r.\] Then, \[h\left(\tau\right)-D=\left(-1\right)^{k-1}\varepsilon\left(-i\tau\sqrt{N}\right)^{k-2}h\left(-\frac{1}{N\tau}\right),\] where \[D=\begin{cases}0&\text{if $k$ is odd};\\[4pt]\displaystyle\frac{L\left(F,\frac{k}{2}\right)}{\left(\frac{k}{2}-1\right)!}\left(2\pi i\tau\right)^{\frac{k}{2}-1}&\text{if $k$ is even}.\end{cases}\]
\end{proposition}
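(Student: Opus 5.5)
The approach is to express $f$ as an iterated integral of $F$ (an Eichler integral) and then use the functional equation of $F$ via Mellin transforms. The plan is to represent $f$ through the kernel $(z-\tau)^{k-2}$ and to split the resulting period integral at the fixed point $\tau=i/\sqrt N$ of the involution $\tau\mapsto -1/(N\tau)$. Concretely, since $\frac{d}{d\tau}=2\pi i\,q\frac{d}{dq}$, we have $\bigl(\frac{1}{2\pi i}\frac{d}{d\tau}\bigr)^{k-1}f=F$. Hence, for $\tau$ in the upper half plane,
\[
f(\tau)=\frac{(2\pi i)^{k-1}}{(k-2)!}\int_{\tau}^{i\infty}F(z)\,(z-\tau)^{k-2}\,dz\cdot(-1)^{k-1},
\]
with the sign fixed by checking it on a single term $q^n$. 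The integral converges because $F$ decays exponentially at $i\infty$.

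The key step is to split $\int_\tau^{i\infty}=\int_0^{i\infty}-\int_0^{\tau}$. The integral $\int_0^{i\infty}$ converges at $0$ thanks to the functional equation, since $F(it)=O(t^{-k}e^{-2\pi/(Nt)})$ as $t\to0$. Expanding $(z-\tau)^{k-2}$ binomially, $\int_0^{i\infty}F(z)\,z^{j}\,dz$ is, by the standard Mellin computation $\int_0^\infty e^{-2\pi n y}y^{s-1}\,dy=\Gamma(s)(2\pi n)^{-s}$, a multiple of $\Gamma(j+1)(2\pi)^{-j-1}L(F,j+1)$. This produces a polynomial $P(\tau)=\sum_{r=0}^{k-2}\frac{L(F,k-r-1)}{r!}(2\pi i\tau)^r$, with the exponents reindexed by $r=k-2-j$. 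In the remaining integral $\int_0^\tau$ we substitute $z=-1/(Nw)$ and apply the hypothesis $F(-1/(Nw))=\varepsilon(-iw\sqrt N)^kF(w)$. Using $(z-\tau)^{k-2}=\bigl(-\tfrac{1}{Nw}+\tfrac{1}{N\tau'}\bigr)^{k-2}$ with $\tau'=-1/(N\tau)$, this integral becomes a constant multiple of $\varepsilon(-i\tau\sqrt N)^{k-2}\int_{\tau'}^{i\infty}F(w)(w-\tau')^{k-2}\,dw$, that is, of $\varepsilon(-i\tau\sqrt N)^{k-2}f(-1/(N\tau))$ up to the factor $(-1)^{k-1}$. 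The result is the identity
\[
f(\tau)-P(\tau)=(-1)^{k-1}\varepsilon(-i\tau\sqrt N)^{k-2}f\!\left(-\tfrac{1}{N\tau}\right).
\]

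The final step converts this into the statement about $h$. The terms of $P$ with $r>\frac{k-2}{2}$ are matched, under $\tau\mapsto-1/(N\tau)$ and multiplication by $(-i\tau\sqrt N)^{k-2}$, with the terms $r<\frac{k-2}{2}$, using the functional equation $L(F,s)$ versus $L(F,k-s)$. That functional equation follows from the same Mellin argument, namely $\Lambda(s)=N^{s/2}(2\pi)^{-s}\Gamma(s)L(F,s)$ satisfies $\Lambda(k-s)=\varepsilon\, i^{k}\,\Lambda(s)$ up to the sign convention. Moving the small-$r$ terms into $h(\tau)$ and the large-$r$ terms into $h(-1/(N\tau))$ leaves only the middle term $r=\frac{k-2}{2}$. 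This term exists exactly when $k$ is even, and it is $D$; when $k$ is odd no middle term appears, giving $D=0$.

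The main obstacle is the bookkeeping of signs and powers of $i$, $\sqrt N$ and $2\pi$. Specifically, one must verify that each term $\frac{L(F,k-r-1)}{r!}(2\pi i\tau)^r$ with $r>\frac{k-2}{2}$ equals exactly $(-1)^{k-1}\varepsilon(-i\tau\sqrt N)^{k-2}$ times the corresponding term $\frac{L(F,k-r'-1)}{r'!}(2\pi i\tau')^{r'}$ with $r'=k-2-r$. I would do this check once for a general $r$ using the $\Lambda$ functional equation, and confirm the overall sign on the trivial case $k=2$.
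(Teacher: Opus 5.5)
Your proposal is correct and is essentially the paper's proof: the same Eichler integral with kernel $(z-\tau)^{k-2}$, the same identity $f(\tau)-P(\tau)=(-1)^{k-1}\varepsilon(-i\tau\sqrt N)^{k-2}f\left(-\frac{1}{N\tau}\right)$ obtained from $\int_0^{i\infty}$ and the substitution $z=-1/(Nw)$, and the same pairing of $r$ with $k-2-r$ via $\Lambda$ that leaves the middle term $D$. Note that you actually split at $0$, as the paper does, not at $i/\sqrt N$ as your first paragraph says.

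One slip needs fixing. Because the hypothesis is normalised by $(-i\tau\sqrt N)^k$, which equals $(y\sqrt N)^k>0$ at $\tau=iy$, the Mellin argument gives exactly $\Lambda(F,s)=\varepsilon\Lambda(F,k-s)$, with no factor $i^k$. Carrying an $i^k$ would spoil the term-by-term matching whenever $k\not\equiv 0\pmod 4$. Your $k=2$ sanity check cannot catch this, since no pairing occurs in that case.
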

\begin{proof}
 Assume that $k\ge 2$. Since $F$ has no constant Fourier coefficient, $F\left(iy\right)$ decays exponentially as $y\to\infty$. Moreover, the Fricke involution eigenvalue relation \cite{Miyake2006} yields
 \[F\left(-\frac{1}{N\tau}\right)=\varepsilon\left(-i\tau\sqrt{N}\right)^kF\left(\tau\right).\]
 Let $\tau=iy$, so
 \[F\left(iy\right)=\varepsilon\left(y\sqrt{N}\right)^{-k}F\left(\frac{i}{Ny}\right).\]
 As such, $F\left(iy\right)$ decays exponentially as $y\to0^+$. Also, on the imaginary axis, \[F\left(iy\right)=\sum_{n=1}^{\infty}a_ne^{-2\pi n y}.\]
 Again, as there is no constant Fourier coefficient, $F\left(iy\right)$ decays exponentially as $y\to\infty$. Hence, all the integrals below converge. We claim that \begin{align}\label{eqn: claim eichler integral}
    f\left(\tau\right)=\frac{\left(-2\pi i\right)^{k-1}}{\left(k-2\right)!}\int_{\tau}^{i\infty}\left(z-\tau\right)^{k-2}F\left(z\right)\;dz.
 \end{align} 
By the Fourier expansion \[F\left(z\right)=\sum_{n=1}^{\infty}a_ne^{2n\pi iz},\]
it suffices to compute \[\int_{\tau}^{i\infty}\left(z-\tau\right)^{k-2}e^{2n\pi iz}\;dz.\]
We parametrise using $z=\tau + it$, where $t\ge 0$. Then, \begin{align*}
    \int_{\tau}^{i\infty}\left(z-\tau\right)^{k-2}e^{2n\pi iz}\;dz=e^{2n\pi i\tau}\int_{0}^{\infty}\left(it\right)^{k-2}e^{-2n\pi t}i\;dt=i^{k-1}e^{2n\pi i\tau}\int_{0}^{\infty}t^{k-2}e^{-2n\pi t}dt.
\end{align*}
Using integration by parts, this is equal to \[\frac{i^{k-1}\left(k-2\right)!}{\left(2\pi n\right)^{k-1}}e^{2\pi in\tau}.\]
Since \[f\left(\tau\right)=\sum_{n=1}^{\infty}\frac{a_n}{n^{k-1}}q^n=\sum_{n=1}^{\infty}\frac{a_n}{n^{k-1}}e^{2n\pi i\tau},\]
then indeed, our formula for $f\left(\tau\right)$ in (\ref{eqn: claim eichler integral}) holds because \[f\left(\tau\right)=\sum_{n=1}^{\infty}\frac{\left(-2\pi i\right)^{k-1}}{\left(k-2\right)!}\frac{i^{k-1}\left(k-2\right)!}{\left(2\pi n\right)^{k-1}}a_ne^{2\pi in\tau}=\sum_{n=1}^{\infty}\frac{a_n}{n^{k-1}}e^{2n\pi i\tau}.\]
Let
\[W_N\tau=-\frac{1}{N\tau}\quad\text{and}\quad A\left(\tau\right)=\left(-1\right)^{k-1}\varepsilon\left(-i\tau\sqrt{N}\right)^{k-2}.\] 
Then, \[A\left(\tau\right)f\left(W_N\tau\right)=\frac{\left(-2\pi i\right)^{k-1}}{\left(k-2\right)!}\int_{\tau}^{0}\left(z-\tau\right)^{k-2}F\left(z\right)\;dz.\] As such, \[f\left(\tau\right)-A\left(\tau\right)f\left(W_N\tau\right)=\frac{\left(-2\pi i\right)^{k-1}}{\left(k-2\right)!}\int_{0}^{i\infty}\left(z-\tau\right)^{k-2}F\left(z\right)\;dz.\] Expanding $\left(z-\tau\right)^{k-2}$ using the binomial theorem, \[\left(z-\tau\right)^{k-2}=\sum_{r=0}^{k-2}\binom{k-2}{r}z^{k-r-2}\left(-\tau\right)^r.\]
Also, using the substitution $z=iy$, \begin{align*}
\int_{0}^{i\infty}z^mF\left(z\right)\;dz&=i^{m+1}\int_{0}^{\infty}y^mF\left(iy\right)\;dy\\\
&=i^{m+1}\sum_{n=1}^{\infty}a_n\int_{0}^{\infty}y^me^{-2n\pi y}\;dy\\
&=\frac{i^{m+1}m!}{\left(2\pi\right)^{m+1}}L\left(F,m+1\right)
\end{align*}
Hence,
\begin{align}\label{eqn: f(tau) - A(tau) etc.}
f\left(\tau\right)-A\left(\tau\right)f\left(W_N\tau\right)=\sum_{r=0}^{k-2}\frac{L\left(F,k-r-1\right)}{r!}\left(2\pi i\tau\right)^r.
\end{align} We now use the functional equation of $L\left(F,s\right)$. Define the completed $L$-function by \[\Lambda\left(F,s\right)=N^{s/2}\left(2\pi\right)^{-s}\Gamma\left(s\right)L\left(F,s\right).\] By the Mellin transform, \[\left(2\pi\right)^{-s}\Gamma\left(s\right)L\left(F,s\right)=\int_{0}^{\infty}F\left(iy\right)y^{s-1}\;dy\] together with the substitution $y\mapsto1/\left(Ny\right)$, we obtain the functional equation
\begin{align}\label{eqn: L function functional equation}
\Lambda\left(F,s\right)=\varepsilon\Lambda\left(F,k-s\right).
\end{align} Taking $s=k-r-1$ in (\ref{eqn: L function functional equation}) yields \begin{align}\label{eqn: l function relation}
\frac{L\left(F,k-r-1\right)}{r!}=\varepsilon N^{r+1-k/2}\left(2\pi\right)^{k-2r-2}\frac{L\left(F,r+1\right)}{\left(k-r-2\right)!}
\end{align} Put \[Q\left(\tau\right)=\sum_{0\le r<\frac{k-2}{2}}\frac{L\left(F,k-r-1\right)}{r!}\left(2\pi i\tau\right)^r,\] so that $h\left(\tau\right)=f\left(\tau\right)-Q\left(\tau\right)$. Applying (\ref{eqn: l function relation}) and replacing $r$ by $k-r-2$, we obtain \begin{align}\label{eqn: sum of l function} 
\sum_{\frac{k-2}{2}<r\le k-2}\frac{L\left(F,k-r-1\right)}{r!}\left(2\pi i\tau\right)^r=-A\left(\tau\right)Q\left(W_N\tau\right).
\end{align} If $k$ is odd, the polynomial on the right side of (\ref{eqn: f(tau) - A(tau) etc.}) has no middle term, and we set $D=0$. If $k$ is even, its middle term corresponds to $r=\frac{k}{2}-1$ and is \[D=\frac{L\left(F,\frac{k}{2}\right)}{\left(\frac{k}{2}-1\right)!}\left(2\pi i\tau\right)^{\frac{k}{2}-1}.\] Thus, by (\ref{eqn: f(tau) - A(tau) etc.}) and (\ref{eqn: sum of l function}), \[f\left(\tau\right)-A\left(\tau\right)f\left(W_N\tau\right)=Q\left(\tau\right)+D-A\left(\tau\right)Q\left(W_N\tau\right).\] Rearranging gives \[h\left(\tau\right)-D=A\left(\tau\right)h\left(W_N\tau\right)=\left(-1\right)^{k-1}\varepsilon\left(-i\tau\sqrt{N}\right)^{k-2}h\left(-\frac{1}{N\tau}\right).\] Finally, setting $s=\frac{k}{2}$ in (\ref{eqn: L function functional equation}) gives \[\Lambda\left(F,\frac{k}{2}\right)=\varepsilon\Lambda\left(F,\frac{k}{2}\right).\] Consequently, if $\varepsilon=-1$, then $\Lambda\left(F,\frac{k}{2}\right)=0$, and hence \[L\left(F,\frac{k}{2}\right)=0.\qedhere\]
\end{proof}
\section{Modular Forms and the Group $\Gamma_1\left(6\right)$}
\begin{definition}[modular form]\label{definition of modular form} Let $\Gamma<\operatorname{SL}_2\left(\mathbb{Z}\right)$ be a subgroup of finite index. Then, a modular form of level $\Gamma$ and weight $k$ is a holomorphic function $f:\mathcal{H}\to\mathbb{C}$, where $\mathcal{H}$ denotes the upper half-plane, satisfying the following conditions:
\begin{itemize}
    \itemsep 0em
    \item Automorphy condition: for any $\gamma\in \Gamma$, $f\left(\gamma\left(z\right)\right)=\left(cz+d\right)^kf\left(z\right)$
    \item Growth condition: for any $\gamma\in\operatorname{SL}_2\left(\mathbb{Z}\right)$, $\left(cz+d\right)^{-k}f\left(\gamma\left(z\right)\right)$ is bounded as $\operatorname{Im}\left(z\right)\to\infty$
\end{itemize}
\end{definition}
In Definition \ref{definition of modular form}, \[\gamma=\begin{bmatrix}
    a& b\\c&d
\end{bmatrix}\in\operatorname{SL}_2\left(\mathbb{Z}\right)\text{ is a matrix}\quad\text{and}\quad\text{it is identified with the function }\gamma\left(z\right)=\frac{az+b}{cz+d}.\]
The identification of functions with matrices makes function composition equivalent to matrix multiplication.
\begin{definition}[modular function] Let $\Gamma\subseteq\operatorname{SL}_2\left(\mathbb{Z}\right)$ be a congruence subgroup acting on $\mathcal{H}$ by Möbius transformations \[\frac{a\tau+b}{c\tau+d}\quad\text{corresponding to }\gamma=\begin{bmatrix}
    a&b\\c&d
\end{bmatrix}\in \Gamma.\]
Then, a modular function for $\Gamma$ is a function $f:\mathcal{H}\to\mathbb{C}$ satisfying the following properties:
\begin{itemize}
    \itemsep 0em
    \item $f$ is meromorphic on $\mathcal{H}$
    \item $f$ is invariant under $\Gamma$. That is to say, \[f\left(\frac{a\tau+b}{c\tau+d}\right)=f\left(\tau\right)\quad\text{for all }\gamma\in \Gamma.\]
    \item $f$ is meromorphic at every cusp of $\Gamma$. In other words, at each cusp, $f$ has a Fourier expansion with only finitely many negative-power terms.
\end{itemize}
\end{definition}

As in \cite{DiamondShurman2005}, let $\Gamma_1\left(6\right)$ denote the \emph{congruence subgroup} of level 6 defined by \[\Gamma_1\left(6\right)=\left\{\begin{bmatrix}
    a&b\\c&d
\end{bmatrix}\in\operatorname{SL}_2\left(\mathbb{Z}\right):a\equiv d\equiv 1\text{ }\left(\operatorname{mod}6\right)\text{ and }c\equiv 0 \text{ }\left(\operatorname{mod}6\right)\right\}.\]
Analogously, define the \emph{congruence subgroup} of level 6 $\Gamma_0\left(6\right)$ by 
\[\Gamma_0\left(6\right)=\left\{\begin{bmatrix}
    a&b\\c&d
\end{bmatrix}\in\operatorname{SL}_2\left(\mathbb{Z}\right):c\equiv 0 \text{ }\left(\operatorname{mod}6\right)\right\}.\]
To visualise $\Gamma_1\left(6\right)$, please refer to \cite{LowryDuda2021}. Also, see Figures \ref{fig:gamma 1 (6) visualisation} and \ref{fig:domain colouring of gamma 1 (6)} for some visualisations of $\Gamma_1\left(6\right)$ and Figure \ref{fig:domain colouring of gamma 0 (6)} for a visualisation of $\Gamma_0\left(6\right)$.
\begin{figure}[H]
    \centering
    \begin{tikzpicture}[scale=5]

\def\H{0.7}

\fill[black!8]
    (-1/2,\H)
    -- (-1/2,0)
    arc[
        start angle=180,
        end angle=0,
        radius=1/12
    ]
    arc[
        start angle=180,
        end angle=0,
        radius=1/6
    ]
    arc[
        start angle=180,
        end angle=0,
        radius=1/6
    ]
    arc[
        start angle=180,
        end angle=0,
        radius=1/12
    ]
    -- (1/2,\H)
    -- cycle;

\draw[->]
    (-0.68,0)
    --
    (0.72,0)
    node[right]
    {$\operatorname{Re}\left(\tau\right)$};

\draw[->]
    (0,-0.03)
    --
    (0,0.7)
    node[above]
    {$\operatorname{Im}\left(\tau\right)$};

\draw[thick]
    (-1/2,0)
    --
    (-1/2,\H);

\draw[thick]
    (1/2,0)
    --
    (1/2,\H);

\draw[thick]
    (-1/2,0)
    arc[
        start angle=180,
        end angle=0,
        radius=1/12
    ]
    arc[
        start angle=180,
        end angle=0,
        radius=1/6
    ]
    arc[
        start angle=180,
        end angle=0,
        radius=1/6
    ]
    arc[
        start angle=180,
        end angle=0,
        radius=1/12
    ];

\foreach \x/\lab in {
    -0.5/{-\frac{1}{2}},
    -0.333333/{-\frac{1}{3}},
     0/{0},
     0.333333/{\frac{1}{3}},
     0.5/{\frac{1}{2}}
}{
    \fill (\x,0) circle (0.45pt);
    \node[below=3pt] at (\x,0) {$\lab$};
}

\node[above] at (-1/2,\H) {$\infty$};
\node[above] at (1/2,\H) {$\infty$};

\node[left] at (-1/2,0.3) {$T$};
\node[right] at (1/2,0.3) {$T^{-1}$};

\node at (-1/6,0.22) {$A$};
\node at (1/6,0.22) {$A^{-1}$};

\node at (-5/12,0.15) {$B$};
\node at (5/12,0.15) {$B^{-1}$};

\end{tikzpicture}
    \caption{Visualising a fundamental domain for the action of $\Gamma_1\left(6\right)$ on $\mathcal{H}$}
    \label{fig:gamma 1 (6) visualisation}
\end{figure}

\begin{figure}[H]
    \centering
    \includegraphics[width=0.8\linewidth]{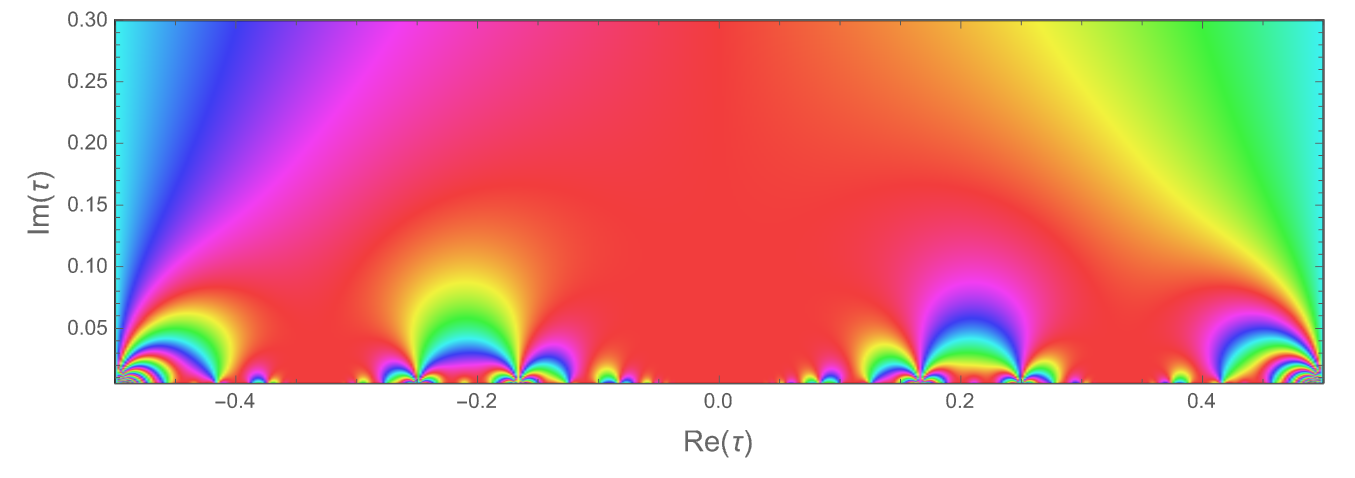}
    \caption{Domain colouring of $\Gamma_1\left(6\right)$}
    \label{fig:domain colouring of gamma 1 (6)}
\end{figure}

\begin{figure}[H]
    \centering
    \includegraphics[width=0.8\linewidth]{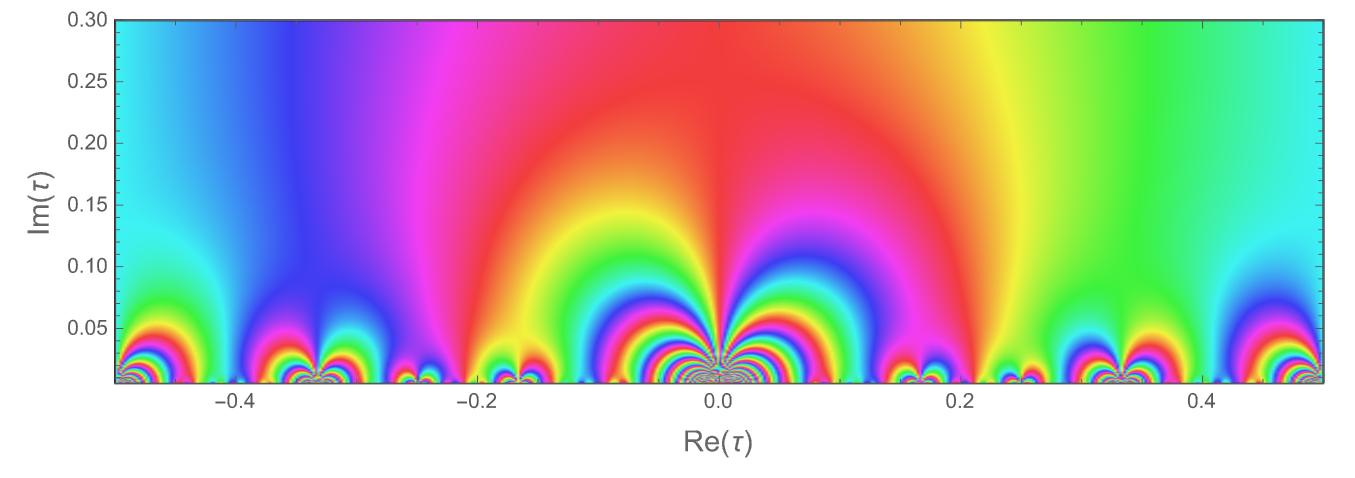}
    \caption{Domain colouring of $\Gamma_0\left(6\right)$}
    \label{fig:domain colouring of gamma 0 (6)}
\end{figure}

Again, let $q=e^{2\pi i\tau}$, where $\operatorname{Im}\left(\tau\right)>0$. Define \begin{align}\label{eqn: definition of y}
y\left(\tau\right)=\frac{\left(\eta\left(6\tau\right)\right)^8\left(\eta\left(\tau\right)\right)^4}{\left(\eta\left(2\tau\right)\right)^8\left(\eta\left(3\tau\right)\right)^4}\quad\text{where }\eta\left(\tau\right)=q^{1/24}\prod_{n=1}^{\infty}\left(1-q^n\right).
\end{align}
Note that $\eta$, known as the Dedekind eta function, is a modular form of weight $1/2$ and it is a function defined on $\mathcal{H}$, where $\operatorname{Im}\left(\tau\right)>0$.
\begin{proposition}\label{prop: y is modular gamma 1 6}
The function \(y\) is a modular function on \(\Gamma_1\left(6\right)\).
\end{proposition}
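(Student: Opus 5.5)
Since $y$ is an eta quotient $\prod_{\delta\mid 6}\eta(\delta\tau)^{r_\delta}$ with $r_1=4$, $r_2=-8$, $r_3=-4$, $r_6=8$, the natural route is the standard criterion for eta quotients (Gordon--Hughes--Newman, as in Ligozat's theorem). If
\[\sum_{\delta\mid N}\delta r_\delta\equiv 0\pmod{24},\qquad \sum_{\delta\mid N}\frac{N}{\delta}r_\delta\equiv 0\pmod{24},\qquad \sum_{\delta\mid N}r_\delta=0,\]
and $\prod_\delta \delta^{r_\delta}$ is a rational square, then the quotient is a weight $0$ modular function on $\Gamma_0(N)$. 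It is therefore invariant under the smaller group $\Gamma_1(N)$, with a character that is trivial on $\Gamma_1(N)$ in any case.

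I will check these conditions for $N=6$. The weight is $\tfrac12\sum r_\delta=\tfrac12(4-8-4+8)=0$. Next, $\sum \delta r_\delta=4-16-12+48=24$. Then $\sum (6/\delta)r_\delta=24-24-8+8=-8$, which is \emph{not} divisible by $24$. So the naive criterion fails as stated, and this is the main obstacle.

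To resolve it, I will recompute carefully. $\sum \delta r_\delta/24=1$ gives the order at $\infty$, so $y=q\prod(\ldots)$ has integral $q$-expansion and no fractional power. The second condition controls the order at the cusp $0$. A nonintegral value there would only mean the expansion at $0$ has a fractional power in the local uniformizer, and the width of $0$ for $\Gamma_0(6)$ is $6$. I therefore expect the correct condition to read modulo $24/\text{width}$-type quantities, or the character to be nontrivial on $\Gamma_0(6)$ but trivial on $\Gamma_1(6)$.

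The most robust plan avoids the criterion and proves invariance directly. I will use the transformation law $\eta(\gamma\tau)=\epsilon(\gamma)(c\tau+d)^{1/2}\eta(\tau)$, with $\epsilon(\gamma)$ a $24$th root of unity given by Dedekind sums or Rademacher's formula. For $\gamma=\begin{bmatrix}a&b\\c&d\end{bmatrix}\in\Gamma_1(6)$ and $\delta\mid 6$, write $\delta\gamma\tau=\gamma_\delta(\delta\tau)$ with $\gamma_\delta=\begin{bmatrix}a&\delta b\\c/\delta&d\end{bmatrix}\in\operatorname{SL}_2(\mathbb Z)$. Then $y(\gamma\tau)=\prod_\delta\epsilon(\gamma_\delta)^{r_\delta}\,y(\tau)$, because the automorphy factors $(c\tau+d)^{r_\delta/2}$ multiply to $(c\tau+d)^0$. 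It remains to show that $\prod_\delta\epsilon(\gamma_\delta)^{r_\delta}=1$. For this I will use the explicit formula for $\epsilon$ with $c>0$:
\[\epsilon(\gamma)=\left(\tfrac{d}{c}\right)\text{-type symbol}\times\exp\!\Big(\tfrac{\pi i}{12}\big((a+d)c-bd(c^2-1)-3c\big)\Big)\quad(c\text{ odd}),\]
together with its analogue for $d$ odd, and the congruences $a\equiv d\equiv1\pmod 6$, $6\mid c$. An easier alternative is to reduce to generators of $\Gamma_1(6)$: $T$ together with the two side-pairings $A,B$ read off from Figure~\ref{fig:gamma 1 (6) visualisation}, namely $A=\begin{bmatrix}1&0\\6&1\end{bmatrix}$ and $B=\begin{bmatrix}7&3\\-12&-5\end{bmatrix}$ or similar. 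Invariance under $T$ is immediate from the integral $q$-expansion. Invariance under $A$ follows from the Fricke-conjugate expansion at $0$, obtained via $\eta(-1/\tau)=\sqrt{-i\tau}\,\eta(\tau)$. For $B$, I will compute the multiplier by Rademacher's formula and check that it equals $1$.

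Finally, meromorphy at the cusps is automatic. Each $\eta(\delta\gamma\tau)$, for any $\gamma\in\operatorname{SL}_2(\mathbb Z)$, is up to constants and automorphy factors a $q$-series in a fractional power of $q$ with nonzero leading term. Hence $y$ has a Fourier expansion with finitely many negative powers at every cusp. Holomorphy on $\mathcal H$ holds because $\eta$ has no zeros there.
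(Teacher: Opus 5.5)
There is a genuine gap, and it starts with an arithmetic slip. The second Newman sum is $\sum_{\delta\mid 6}(6/\delta)r_\delta=24-24-8+8=0$, not $-8$, so the obstacle your proposal is built around does not exist. Both congruences hold and the weight is $0$. Also $\prod_\delta\delta^{r_\delta}=2^{-8}3^{-4}6^{8}=81=9^2$, a condition you listed but never checked. The Gordon--Hughes--Newman criterion therefore gives $y(\gamma\tau)=\left(\frac{81}{d}\right)y(\tau)=y(\tau)$ for every $\gamma\in\Gamma_0(6)\supseteq\Gamma_1(6)$. That is exactly the paper's proof, which then uses Ligozat's formula for the cusp orders.

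Your proposed repair of the non-existent obstacle is also wrong in substance. The quantity $\frac{1}{24}\sum_\delta(6/\delta)r_\delta$ is the leading exponent of $y(-1/(6\tau))=\frac19\,\eta(\tau)^8\eta(6\tau)^4/(\eta(2\tau)^4\eta(3\tau)^8)$. If your value were right, this exponent would be $-1/3$, so $y(-1/(6\tau))$ would not be $1$-periodic. Then $y$ would fail to be invariant under $W_6TW_6^{-1}=A^{-1}$, where $A=\begin{bmatrix}1&0\\6&1\end{bmatrix}\in\Gamma_1(6)$. The proposition would then be false, and no ``modulo $24/\text{width}$'' reformulation could rescue it; your own planned $A$-step via the expansion at $0$ would have exposed the contradiction. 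Nor can a character here be trivial on $\Gamma_1(6)$ but nontrivial on $\Gamma_0(6)$. Since $(\mathbb{Z}/6\mathbb{Z})^\times=\{\pm1\}$, one has $\Gamma_0(6)=\pm\Gamma_1(6)$, and $-I$ acts trivially in weight $0$.

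As written, your fallback never actually proves invariance. The identity $\prod_\delta\epsilon(\gamma_\delta)^{r_\delta}=1$ is the entire content of the proposition, yet it is deferred to a Rademacher computation that you do not carry out, either for general $\gamma$ or for the generator $B$. The reduction to $T,A,B$ is legitimate, because in weight $0$ the multiplier is a homomorphism, but it remains only a plan. Redo the arithmetic and the standard criterion finishes the proof at once. Your remarks on holomorphy on $\mathcal{H}$ and meromorphy at the cusps are fine.
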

\begin{proof}
Since each $\eta$ is holomorphic and non-zero on $\mathcal{H}$, then in the formula for $y$ (\ref{eqn: definition of y}), the denominator never vanishes. So, $y$ is holomorphic on $\mathcal{H}$. 

Then, we prove that $y$ is invariant for every $\gamma\in \Gamma_0\left(6\right)$. We write \[y\left(\tau\right)=\prod_{\delta\mid 6}\eta\left(\delta \tau\right)^{r_\delta},\]
where $r_1=4$, $r_2=-8$, $r_3=-4$, and $r_6=8$. The weight of this eta quotient is \[k=\frac{1}{2}\sum_{\delta\mid 6}r_\delta=\frac{4-8-4+8}{2}=0.\]
By Newman's eta quotient criterion \cite{RouseWebb2015},
\begin{align*}
\sum_{\delta\mid 6}\delta r_\delta=1\left(4\right)+2\left(-8\right)+3\left(-4\right)+6\left(8\right)=24\equiv 0\pmod{24}
\end{align*}
and
\begin{align*}
\sum_{\delta\mid 6}\frac{6}{\delta}r_\delta=6\left(4\right)+3\left(-8\right)+2\left(-4\right)+1\left(8\right)=0\equiv 0\pmod{24}.
\end{align*}
Thus, the two congruence conditions in the eta-quotient criterion are satisfied. The associated character is
\[\chi\left(d\right)=\left(\frac{(-1)^k\prod_{\delta\mid 6}\delta^{r_\delta}}{d}\right),
\]
where the expression on the right is the Kronecker symbol. We have
\begin{align*}
\prod_{\delta\mid 6}\delta^{r_\delta}=81.
\end{align*}
Since \(81=9^2\) is a square, we have
\[\chi\left(d\right)=\left(\frac{81}{d}\right)=1\]
whenever \(\gcd\left(d,6\right)=1\). Hence, for every
\[\gamma=
\begin{bmatrix}
a&b\\
c&d
\end{bmatrix}\in\Gamma_0\left(6\right),
\]
the eta quotient transformation law gives
\[y\left(\gamma\tau\right)=\chi\left(d\right)\left(c\tau+d\right)^k y\left(\tau\right)=y\left(\tau\right)\]
because \(k=0\) and \(\chi\left(d\right)=1\). This proves the required invariance.

Lastly, we prove that $f$ is meromorphic at every cusp of $\Gamma$. Based on our earlier discussion, $y$ is holomorphic and non-zero on $\mathcal{H}$. As such, any zeros or poles of $y$ can therefore occur only at the cusps. For a cusp represented by \(a/c\), where \(c\mid 6\) and \(\gcd\left(a,c\right)=1\), By Ligozat's formula \cite{RouseWebb2015, Ligozat1975, McMurdy2001},
\[\operatorname{ord}_{a/c}\left(y\right)=\frac{6}{24}
\sum_{\delta\mid 6}\frac{\gcd\left(c,\delta\right)^2r_\delta}{\gcd\left(c,6/c\right)c\delta}.\]
Substituting the four possible values \(c=1,2,3,6\) gives
\[
\begin{array}{c|c|c}
c & \text{cusp class}
  & \operatorname{ord}_{a/c}\left(y\right)\\
\hline
1 & 0              & 0\\
2 & \frac12       & -1\\
3 & \frac13       & 0\\
6 & \infty         & 1
\end{array}
\]
Thus, \(y\) is meromorphic at every cusp: it has a simple pole at the cusp \(1/2\), a simple zero at \(\infty\), and neither a zero nor a pole at the other two cusps. Consequently, \(y\) is a weight-zero modular function on \(\Gamma_0\left(6\right)\). Since $\Gamma_1\left(6\right)\subseteq\Gamma_0\left(6\right)$, then \(y\) is also a modular function on
\(\Gamma_1\left(6\right)\).
\end{proof}
Now, since $y\left(\tau\right)$ has only one simple zero in the fundamental domain, it generates the field of modular functions on $\Gamma_1\left(6\right)$. Moreover, $y\left(0\right)=\frac{1}{9}$, $y\left(\frac{1}{3}\right)=1$, $y\left(\frac{1}{2}\right)=\infty$, and $y\left(\infty\right)=0$. Also, the function $y\left(-\frac{1}{6\tau}\right)$ is invariant on $\Gamma_1\left(6\right)$ and \begin{align}\label{eqn: eqn:y-fricke-transformation}
y\left(-\frac{1}{6\tau}\right)=\frac{y\left(\tau\right)-1/9}{y\left(\tau\right)-1}.
\end{align}
Hence, \begin{align}\label{eqn: equation involving t} t\left(\tau\right)=y\left(\tau\right)\frac{1-9y\left(\tau\right)}{1-y\left(\tau\right)}
\end{align}
is invariant under the involution $\tau\mapsto -\frac{1}{6\tau}$. Also, let 
\[\Delta\left(\tau\right)=q\prod_{n=1}^{\infty}\left(1-q^n\right)^{24}=\left(\eta\left(\tau\right)\right)^{24}\]
denote the modular discriminant, which is a cusp form of weight 12 for $\operatorname{SL}_2\left(\mathbb{Z}\right)$. Then, \begin{align}\label{eqn: beukers formula for t}
t\left(\tau\right)=\left(\frac{\Delta\left(6\tau\right)\Delta\left(\tau\right)}{\Delta\left(3\tau\right)\Delta\left(2\tau\right)}\right)^{1/2}=\left(\frac{\eta\left(6\tau\right)\eta\left(\tau\right)}{\eta\left(3\tau\right)\eta\left(2\tau\right)}\right)^{12}=q\prod_{n=0}^{\infty}\left(1-q^{6n+1}\right)^{12}\left(1-q^{6n+5}\right)^{12}.
\end{align}
Indeed, the eta quotient
\[\left(\frac{\Delta\left(6\tau\right)\Delta\left(\tau\right)}{\Delta\left(3\tau\right)\Delta\left(2\tau\right)}\right)^{1/2}\]
is a modular function for \(\Gamma_1\left(6\right)\). It is invariant under the Fricke involution $\tau\mapsto -\frac{1}{6\tau}$ and its zeros and poles agree, with the same multiplicities, with those of \(t\left(\tau\right)\). Consequently, the quotient of these two functions is constant. Comparing their leading terms in the \(q\)-expansion shows that this constant is \(1\). At this juncture, we also point out that there is an erratum in Beukers' paper as the exponent of $1-q^{6n+5}$ in (\ref{eqn: beukers formula for t}) should be 12 instead of $-12$ \cite{Beukers1987}.
\begin{proposition}
The function $t\left(\tau\right)$, as defined in (\ref{eqn: beukers formula for t}) maps the shaded open area in Figure \ref{fig:region-I-under-t} univalently onto $\mathcal{H}$ and satisfies the following properties: \[t\left(i\infty\right)=0,\quad t\left(\frac{i}{\sqrt{6}}\right)=\left(\sqrt{2}-1\right)^4,\quad t\left(\frac{2}{5}+\frac{i}{5\sqrt{6}}\right)=\left(\sqrt{2}+1\right)^4,\quad t\left(\frac{1}{2}\right)=\infty.\]
\end{proposition}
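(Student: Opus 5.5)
The plan is to show that $t$ is a Hauptmodul for the group $\Gamma_0(6)^{+}$ generated by $\Gamma_0(6)$ and the Fricke involution $W_6\colon\tau\mapsto-\frac{1}{6\tau}$. Then I would show that the shaded region in Figure \ref{fig:region-I-under-t} is half of a fundamental domain for this group, cut out by the symmetry $\tau\mapsto-\bar\tau$. Finally, an argument-principle/boundary-correspondence argument gives univalence onto $\mathcal{H}$.

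\textbf{Step 1: $t$ has degree one on the quotient.} By Proposition \ref{prop: y is modular gamma 1 6}, $y$ is $\Gamma_0(6)$-invariant with a single simple zero (at $\infty$), so $y$ generates the function field of $X_0(6)$. By (\ref{eqn: eqn:y-fricke-transformation}), $W_6$ acts on $y$ by the Möbius map $y\mapsto\frac{y-1/9}{y-1}$. Hence $t=y\frac{1-9y}{1-y}$, being rational of degree $2$ in $y$ and $W_6$-invariant, generates the fixed field. Thus $t$ takes each value of $\mathbb{C}\cup\{\infty\}$ exactly once, with multiplicity, on $\Gamma_0(6)^{+}\backslash(\mathcal{H}\cup\text{cusps})$.

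The two cusp classes of $\Gamma_0(6)^{+}$ are represented by $\infty$ (identified with $0$) and $\frac12$ (identified with $\frac13$). Since $y$ has a pole at $\frac12$, we get $t\sim-9y\to\infty$ there, so $t(\frac12)=\infty$. Also $t(i\infty)=0$ from $t=q+\cdots$.

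\textbf{Step 2: the boundary maps into $\mathbb{R}$.} The $q$-expansion (\ref{eqn: beukers formula for t}) has real coefficients, so $t(-\bar\tau)=\overline{t(\tau)}$. I would then check that $t$ is real on each boundary piece of the shaded region:
\begin{itemize}
\item on the imaginary axis, because $q>0$ there;
\item on $\operatorname{Re}\tau=\frac12$, because $q<0$ there;
\item on the arc $|\tau|=\frac1{\sqrt6}$, because there $W_6\tau=-\bar\tau$, so $t(\tau)=t(W_6\tau)=\overline{t(\tau)}$;
\item on the remaining arc through $\frac25+\frac{i}{5\sqrt6}$, by the same reasoning with an element $\gamma W_6$, where $\gamma\in\Gamma_0(6)$ is chosen so that $\gamma W_6$ acts as $\tau\mapsto-\bar\tau$ up to a translation by $1$ on that arc.
\end{itemize}
Identifying this element (an Atkin--Lehner-type involution of the form $\frac{1}{\sqrt6}\begin{bmatrix}6a&b\\6c&6d\end{bmatrix}$) and checking that it fixes $\frac25+\frac{i}{5\sqrt6}$ is, I expect, the main computational obstacle. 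I would find it by solving for an involution of determinant $6$ whose isometric circle is that arc.

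Given the boundary computation, the shaded region together with its mirror image under $\tau\mapsto-\bar\tau$ is a fundamental domain for $\Gamma_0(6)^{+}$, so by Step 1, $t$ is injective on the open shaded region. It is real and continuous on the boundary, tending to $0$ and $\infty$ at the two cusps, so the boundary maps monotonically onto $\mathbb{R}\cup\{\infty\}$. By the argument principle, the interior maps bijectively onto one half-plane. Near $i\infty$ with $0<\operatorname{Re}\tau<\frac12$ we have $\arg q\in(0,\pi)$ and $t\approx q$, so that half-plane is $\mathcal{H}$.

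\textbf{Step 3: the special values.} The points $\frac{i}{\sqrt6}$ and $\frac25+\frac{i}{5\sqrt6}$ are fixed by $W_6$ and by $\gamma W_6$ respectively, and $\gamma W_6$ also acts on $y$ by $y\mapsto\frac{y-1/9}{y-1}$. So at each of these points $y$ satisfies $y=\frac{y-1/9}{y-1}$, that is, $y^2-2y+\frac19=0$, giving $y=1\mp\frac{2\sqrt2}{3}$. Substituting $y=1-\frac{2\sqrt2}{3}$ gives
\[
t=\frac{(3-2\sqrt2)(6\sqrt2-8)}{2\sqrt2}=(3-2\sqrt2)^2=(\sqrt2-1)^4,
\]
and the other root gives $(\sqrt2+1)^4$.

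To decide which root belongs to which point, I would use monotonicity of the real boundary values. Along the imaginary axis from $i\infty$ down to $\frac{i}{\sqrt6}$, $t$ increases from $0$, so it first reaches the smaller value $(\sqrt2-1)^4$. The larger value $(\sqrt2+1)^4$ is then attained at the next vertex $\frac25+\frac{i}{5\sqrt6}$, before $t\to\infty$ at $\frac12$. This ordering is forced by the boundary running monotonically through $\mathbb{R}$ in the order $0,(\sqrt2-1)^4,(\sqrt2+1)^4,\infty$.
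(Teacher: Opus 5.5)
\textbf{Gap in Step 2, fourth bullet.} The step fails as stated. The side of region $\mathrm{I}$ from $P=\frac25+\frac{i}{5\sqrt6}$ to $\frac12$ is the arc $|\tau-\frac{5}{12}|=\frac{1}{12}$. No element of the Fricke coset $\Gamma_0(6)W_6$ has this arc as isometric circle. For $\frac{1}{\sqrt6}\begin{bmatrix}6a&b\\6c&6d\end{bmatrix}$ the isometric circle has radius $1/(\sqrt6|c|)$, which would force $|c|=2\sqrt6$.

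The element that actually pairs this side lies in $\Gamma_0(6)$ itself: $\gamma_0=\begin{bmatrix}5&-2\\-12&5\end{bmatrix}$. Its isometric circle is $|{-12\tau}+5|=1$. On $|c\tau+d|=1$ one has $\gamma\tau=\frac{a-d}{c}-\bar\tau$, so on this arc $\gamma_0\tau=-\bar\tau$. Any element of $\Gamma_0(6)^{+}$ acting as $\tau\mapsto-\bar\tau+n$ on the arc agrees with $T^n\gamma_0$ on an arc, hence equals it. So it is never of the form $\gamma W_6$, and the search you call the main obstacle has no solution.

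You have also conflated this side pairing with the involution fixing $P$. That involution is $\begin{bmatrix}5&2\\12&5\end{bmatrix}W_6=\frac{1}{\sqrt6}\begin{bmatrix}12&-5\\30&-12\end{bmatrix}$. Its isometric circle is $|\tau-\frac25|=\frac{1}{5\sqrt6}$, which is not a side of $\mathrm{I}$.

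\textbf{The repair.} Reality on the small arc follows from $t(\tau)=t(\gamma_0\tau)=t(-\bar\tau)=\overline{t(\tau)}$, exactly as for the big arc. For Step 3, the element you need is the vertex-cycle element $\gamma_0^{-1}W_6$, the composite of the two side pairings meeting at $P$. It lies in $\Gamma_0(6)W_6$, fixes $P$, and acts on $y$ by $y\mapsto\frac{y-1/9}{y-1}$. So $y(P)$ is indeed a root of $y^2-2y+\frac19=0$.

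\textbf{Comparison with the paper, once repaired.} Your Step 3 is then a valid route that differs from the paper's. The paper fixes $t\left(\frac{i}{\sqrt6}\right)$ using the product formula ($0<t<1$ for $q\in(0,1)$). It then transports this value to $P$ with the Atkin--Lehner involution $W_2$, using $t\circ W_2=1/t$. You instead get both candidates from fixed-point equations and sort them by boundary order, which avoids needing the $W_2$-action on $y$.

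Two points in your version need tightening:
\begin{itemize}
\item Monotonicity needs $t$ to be injective on $\partial\mathrm{I}$, not merely real and continuous there. This comes from the side-pairing structure, or from Carath\'eodory's boundary correspondence once $t\colon\mathrm{I}\to\mathcal{H}$ is known to be a conformal bijection.
\item The direction of travel is fixed by $t>0$ on the imaginary axis.
\end{itemize}
A minor slip: $t\sim 9y$, not $-9y$, as $y\to\infty$.
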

\begin{figure}[H]
    \centering
    \begin{tikzpicture}[
        scale=4.5,
        line cap=round,
        line join=round
    ]

\def\H{0.78}

\pgfmathsetmacro{\Rbig}{1/sqrt(6)}
\pgfmathsetmacro{\rsmall}{1/12}
\pgfmathsetmacro{\thetaP}{asin(1/5)}
\pgfmathsetmacro{\thetaSmall}{acos(-1/5)}

\coordinate (P) at ({2/5},{1/(5*sqrt(6))});
\coordinate (Q) at (0,{1/sqrt(6)});

\path[
    pattern=north east lines,
    pattern color=black
]
    (0,\H)
    -- (1/2,\H)
    -- (1/2,0)
    arc[
        start angle=0,
        end angle=\thetaSmall,
        radius=\rsmall
    ]
    arc[
        start angle=\thetaP,
        end angle=90,
        radius=\Rbig
    ]
    -- cycle;

\draw[thick]
    (-1/2,0)
    --
    (-1/2,\H);

\draw[thick]
    (1/2,0)
    --
    (1/2,\H);

\draw[thick]
    (-0.58,0)
    --
    (0.58,0);


\draw[thick]
    (-1/2,0)
    arc[
        start angle=180,
        end angle=0,
        radius=1/12
    ];

\draw[thick,dashed]
    (-1/3,0)
    arc[
        start angle=180,
        end angle=0,
        radius=1/6
    ];

\draw[thick,dashed]
    (0,0)
    arc[
        start angle=180,
        end angle=0,
        radius=1/6
    ];

\draw[thick]
    (1/3,0)
    arc[
        start angle=180,
        end angle=0,
        radius=1/12
    ];

\draw[thick]
    (P)
    arc[
        start angle=\thetaP,
        end angle={180-\thetaP},
        radius=\Rbig
    ];

\draw[thick]
    (Q)
    --
    (0,\H);

\node at (0.25,0.57)
    {\Large $\mathrm{I}$};

\node at (-0.29,0.57)
    {\Large $\mathrm{II}$};

\node[
    right=2pt,
    below=1pt
] at (Q)
    {$\frac{i}{\sqrt6}$};

\fill
    (P)
    circle (0.45pt);

\draw
    (P)
    --
    (0.59,0.16);

\node[right] at (0.59,0.16)
    {$\frac25+\frac{i}{5\sqrt6}$};

\foreach \x/\lab in {
    -0.5/{-\frac12},
    -0.333333/{-\frac13},
     0/{0},
     0.333333/{\frac13},
     0.5/{\frac12}
}{
    \fill
        (\x,0)
        circle (0.45pt);

    \node[below=3pt] at (\x,0)
        {$\lab$};
}

\node at (0.86,0.40)
    {$\tau\longmapsto t\left(\tau\right)$};

\draw[->,thick]
    (0.72,0.31)
    --
    (1.03,0.31);

\begin{scope}[shift={(1.38,0)}]

    \path[
        pattern=north east lines,
        pattern color=black
    ]
        (0,0)
        rectangle
        (1.35,0.72);

    \draw[thick]
        (-0.02,0)
        --
        (1.40,0);

    \coordinate (Tinf) at (0.22,0);
    \coordinate (Ti)   at (0.54,0);
    \coordinate (Tp)   at (0.98,0);

    \fill (Tinf) circle (0.45pt);
    \fill (Ti)   circle (0.45pt);
    \fill (Tp)   circle (0.45pt);

    \node[below=4pt] at (Tinf)
        {$t\left(i\infty\right)$};

    \node[below=4pt] at (Ti)
        {$t\left(\frac{i}{\sqrt6}\right)$};

    \node[below=4pt] at (Tp)
        {$t\left(
            \frac25+\frac{i}{5\sqrt6}
        \right)$};

\end{scope}

    \end{tikzpicture}

    \caption{The region $\mathrm{I}$ and its image under the map $\tau\mapsto t\left(\tau\right)$}
    \label{fig:region-I-under-t}
\end{figure}
\begin{proof}
Recall the auxiliary modular function (\ref{eqn: definition of y}) and that Proposition \ref{prop: y is modular gamma 1 6} asserts that \(y\) is a modular function on \(\Gamma_1\left(6\right)\). The values at the four inequivalent cusps are $y\left(i\infty\right)=0$, $y\left(0\right)=\frac{1}{9}$, $y\left(\frac{1}{3}\right)=1$, and $y\left(\frac{1}{2}\right)=\infty$. 

Moreover, \(y\) has a unique simple pole on the compact modular curve \(X_1\left(6\right)\). It therefore defines a degree-one map $X_1\left(6\right)\rightarrow\mathbb{P}^1\left(\mathbb{C}\right)$ and hence \(y\) is a Hauptmodul for \(\Gamma_1\left(6\right)\).

Let $W_6\tau=-\frac{1}{6\tau}$. A direct application of the eta transformation formula gives
\begin{align}\label{eqn: y fricke 2}
y\left(W_6\tau\right)=\frac{y\left(\tau\right)-1/9}{y\left(\tau\right)-1}
\end{align}
as in (\ref{eqn: eqn:y-fricke-transformation}). Define
\[R\left(Y\right)=\frac{Y\left(1-9Y\right)}{1-Y}.\]
Then, $t\left(\tau\right)=R\left(y\left(\tau\right)\right)$. If
\[\phi\left(Y\right)=\frac{Y-1/9}{Y-1},
\]
then
\[R\left(\phi\left(Y\right)\right)=R\left(Y\right).\]
Thus,
\[t\left(W_6\tau\right)=t\left(\tau\right).\]
More precisely, for \(X,Y\in\mathbb{P}^1\left(\mathbb{C}\right)\),
\[R\left(X\right)-R\left(Y\right)=\frac{\left(X-Y\right)\left(9XY-9X-9Y+1\right)}{\left(X-1\right)\left(Y-1\right)}.\]
Consequently,
\[R\left(X\right)=R\left(Y\right)\quad\text{if and only if}\quad X=Y\text{ or }X=\phi\left(Y\right).
\]
Since \(y\) is a Hauptmodul, it follows that two points have the same \(t\)-value precisely when they are equivalent either under \(\Gamma_1\left(6\right)\) or under \(W_6\Gamma_1\left(6\right)\). Hence \(t\) is a Hauptmodul for the extended group $\Gamma_1\left(6\right)^{+}=\left\langle\Gamma_1\left(6\right),W_6\right\rangle$, where $\left\langle A,B\right\rangle$ denotes the group generated by $A$ and $B$. 

The regions labelled \(\mathrm{I}\) and \(\mathrm{II}\) in Figure \ref{fig:region-I-under-t} together form a fundamental region for this extended group. Therefore, \(t\) assumes every value at most once in \(\mathrm{I}\cup\mathrm{II}\), and in particular it is injective on the shaded open region \(\mathrm{I}\). So, $t$ is univalent.

We next show that the boundary of \(\mathrm{I}\) is mapped into \(\mathbb{R}\cup\left\{\infty\right\}\). Indeed, each boundary arc is paired with its complex-conjugate arc by an element of \(\Gamma_1\left(6\right)^{+}\). Since the Fourier expansion of \(t\) has real
coefficients,
\[t\left(-\overline{\tau}\right)=\overline{t\left(\tau\right)}.\]
For a point \(\tau\) on a boundary arc, the side-pairing therefore gives
\[t\left(\tau\right)=t\left(-\overline{\tau}\right)=\overline{t\left(\tau\right)},\]
so that \(t\left(\tau\right)\) is real. It follows that \(t\) maps the shaded region conformally onto one of the two
half-planes. To determine which one, take $\tau=\frac{1}{4}+iT$ with \(T\) sufficiently large. Since
\[q=e^{2\pi i\tau}=ie^{-2\pi T}\quad\text{and}\quad t\left(\tau\right)=q+O\left(q^2\right),\]
then $\operatorname{Im}\left(t\left(\tau\right)\right>0$ for sufficiently large \(T\). Thus the image is the upper half-plane \(\mathcal{H}\). Hence, \(t\) maps the shaded open region univalently onto \(\mathcal{H}\). It remains to calculate the distinguished boundary values. First,
\[t\left(i\infty\right)=R\left(y\left(i\infty\right)\right)=R\left(0\right)=0.\]
Similarly, since \(y\left(\frac{1}{2}\right)=\infty\) and $R\left(Y\right)\sim 9Y$ as $Y\to\infty$, then $t\left(\frac{1}{2}\right)=\infty$. 

Now, put $\tau_0=\frac{i}{\sqrt{6}}$. This point is fixed by \(W_6\) since $-\frac{1}{6\tau_0}=\tau_0$. Let $y_0=y\left(\tau_0\right)$. Then, (\ref{eqn: y fricke 2}) gives
\[y_0=\frac{y_0-1/9}{y_0-1}.\]
Therefore,
\[y_0^2-2y_0+\frac{1}{9}=0\quad\text{so}\quad y_0=1\pm\frac{2\sqrt{2}}{3}.
\]
Substituting \(t=R\left(y\right)\) gives the two possible values
\[t\left(\tau_0\right)=17\pm12\sqrt{2}=\left(\sqrt{2}\pm1\right)^4.\]
At \(\tau_0\), the nome is $q_0=e^{-\frac{2\pi}{\sqrt{6}}}\in\left(0,1\right)$. Using the infinite-product expression
\[t\left(\tau_0\right)=q_0\prod_{n=0}^{\infty}\left(1-q_0^{6n+1}\right)^{12}\left(1-q_0^{6n+5}\right)^{12},\]
we see that $0<t\left(\tau_0\right)<1$. We must therefore take the smaller of the two possible values:
\[t\left(\frac{i}{\sqrt{6}}\right)=17-12\sqrt{2}
=\left(\sqrt{2}-1\right)^4.\]
Finally, consider the Atkin-Lehner involution $W_2$ at level 6 
\[W_2\tau=\frac{2\tau-1}{6\tau-2}
\]
as in \cite{AtkinLehner1970}. The eta transformation law, or equivalently the action of \(W_2\) on the four cusps, gives
\[
y\left(W_2\tau\right)
=
\frac{1-y\left(\tau\right)}
{1-9y\left(\tau\right)}.
\]
Since
\[
R\left(\frac{1-Y}{1-9Y}\right)
=
\frac{1}{R\left(Y\right)},
\]
we obtain
\begin{align}\label{eqn: w2 transformation}
t\left(W_2\tau\right)=\frac{1}{t\left(\tau\right)}.
\end{align}
Furthermore,
\[W_2\left(\frac{i}{\sqrt{6}}\right)=\frac{2i/\sqrt{6}-1}{6i/\sqrt{6}-2}=\frac{2}{5}+\frac{i}{5\sqrt{6}}.
\]
It follows from (\ref{eqn: w2 transformation}) that
\begin{align*}
t\left(\frac{2}{5}+\frac{i}{5\sqrt{6}}\right)=\frac{1}{t\left(\frac{i}{\sqrt{6}}\right)}=\frac{1}{\left(\sqrt{2}-1\right)^4}=\left(\sqrt{2}+1\right)^4.
\end{align*}
Therefore, we obtain the mentioned values of $t$ at the two cusps and the two distinguished interior points.
\end{proof}

In the theorems and proofs that we encounter in due course, let $M_k\left(\Gamma_1\left(6\right)\right)$ denote the space of modular forms of weight $k$ with respect to $\Gamma_1\left(6\right)$, and let \[E_4\left(\tau\right)=1+240\sum_{n=1}^{\infty}\sigma_3\left(n\right)q^n\quad\text{and}\quad E_2\left(\tau\right)=1-24\sum_{n=1}^{\infty}\sigma\left(n\right)q^n\]
be Eisenstein series. Here, \[\sigma_z\left(n\right)=\sum_{d\mid n}d^z\]
denotes the sum of positive divisors function.
\begin{theorem}[Apéry]
$\zeta\left(3\right)$ is irrational.
\end{theorem}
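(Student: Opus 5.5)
Following Beukers' modular proof, the plan is to apply Proposition \ref{prop: Beukers' irrationality criterion} with $k=1$, $d=1$, $r=3$ to power series in the Hauptmodul $t$ of (\ref{eqn: beukers formula for t}).

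\textbf{Step 1: the weight-two form.} Take the weight-$2$ form
\[E\left(\tau\right)=\frac{1}{24}\left(-5E_2\left(\tau\right)+2E_2\left(2\tau\right)-3E_2\left(3\tau\right)+30E_2\left(6\tau\right)\right),\]
or equivalently the eta quotient $\left(\eta(2\tau)\eta(3\tau)\right)^7/\left(\eta(\tau)\eta(6\tau)\right)^5$. It lies in $M_2\left(\Gamma_1\left(6\right)\right)$ and satisfies $E\left(W_6\tau\right)=-6\tau^2E\left(\tau\right)$. Classically $E$, viewed as a function of $t$ near $t=0$, equals $\sum_n A_nt^n$, where $A_n=\sum_k\binom{n}{k}^2\binom{n+k}{k}^2$ are the Apéry numbers. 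The coefficients are integers because $E$ is annihilated by a second-order Picard--Fuchs operator in $t$ with integral recursion; alternatively one checks that the $q$-expansion inverts integrally since $t=q+O\left(q^2\right)$ has integer coefficients. Next take a cusp form of weight $4$,
\[F\left(\tau\right)=t\left(\tau\right)E\left(\tau\right)^2\cdot\left(\text{correction}\right)\in M_4,\]
or more concretely
\[F=\frac{1}{240}\left(-E_4(\tau)+28E_4(2\tau)-63E_4(3\tau)+36E_4(6\tau)\right)\cdot(\text{normalization}).\]
The choice is made so that $F=\sum a_nq^n$ satisfies $F\left(W_6\tau\right)=\varepsilon\left(-i\tau\sqrt6\right)^4F\left(\tau\right)$ and $L\left(F,3\right)$ is a nonzero rational multiple of $\zeta\left(3\right)$. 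Since $a_n$ is a combination of $\sigma_3$ twisted by divisibility by $2,3,6$, we get $L\left(F,s\right)=c\,\zeta\left(s\right)\zeta\left(s-3\right)\cdot\left(\text{Euler factors at }2,3\right)$, and at $s=3$ the Euler factors combine with $\zeta\left(0\right)$ to a rational multiple of $\zeta\left(3\right)$. I will tune the coefficients so that the relevant quantity is exactly $\zeta(3)$ times a rational, and so that $F/E^2$, a weight-$0$ function invariant under $\Gamma_1(6)^+$, is a polynomial in $t$ (I expect $F=t\,(1-34t+t^2)\,E^2$ up to a constant, vanishing at the $W_2$-fixed points).

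\textbf{Step 2: the Eichler integral.} With $k=4$, Proposition \ref{prop: h formula in terms of d} gives $f=\sum a_nn^{-3}q^n$ and $h=f-L\left(F,3\right)$, with $D=\frac{L(F,2)}{1!}\left(2\pi i\tau\right)$. Thus $h\left(\tau\right)-D=\varepsilon\left(-i\tau\sqrt6\right)^2h\left(W_6\tau\right)$. Now set $w=h\cdot E$ (weight $-2+2=0$, up to terms linear in $\tau$ times $E$). Since $E\left(W_6\tau\right)=-6\tau^2E\left(\tau\right)$, and the automorphy factors cancel when $\varepsilon$ has the right sign, $w$ is invariant under $W_6$ modulo an additive term $\tau E$ (or $D E$). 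The function $\tau E$ is multivalued in $t$ in a controlled way. Expand in $t$:
\[\left(f-L\left(F,3\right)\right)E=\sum_n\left(B_n-L\left(F,3\right)A_n\right)t^n.\]
Denominators: $f$ has $q$-coefficients with denominator dividing $n^3$, so in $t$ the $n$th coefficient of $fE$ has denominator dividing $\operatorname{lcm}\left(1,\ldots,n\right)^3$. This requires that $q\left(t\right)$ has integer coefficients and that $\frac{d}{dt}$ of $fE$ behaves well. I would verify it via the Apéry-type recurrence satisfied by the $B_n$, or by noting that $\left(t\frac{d}{dt}\right)^3$ applied appropriately gives a series with integral coefficients.

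\textbf{Step 3: the radius of convergence (main obstacle).} The branching values of $t$ on $\mathbb D$ are $t_1=\left(\sqrt2-1\right)^4$ (the image of the $W_6$-fixed point $i/\sqrt6$), $t_2=\left(\sqrt2+1\right)^4$, and $\infty$. The series $\sum A_nt^n$ has radius $t_1$, so its coefficients grow like $\left(\sqrt2+1\right)^{4n}$. For the combination $hE$, going once around $t_1$ in the $t$-plane corresponds to applying $W_6$ on $\tau$. By Proposition \ref{prop: h formula in terms of d}, the monodromy of $hE$ produces only the term $D\cdot E$, proportional to $\tau E$. I must choose the constant term exactly so that this extra piece vanishes; possibly one uses instead the combination $h$ with $L(F,2)=0$ forced by $\varepsilon=-1$, as noted at the end of that proof. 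Then $hE$ returns to itself and stays bounded near $t_1$, since $\tau$ stays bounded near $i/\sqrt6$. By the discussion preceding Proposition \ref{prop: Beukers' irrationality criterion}, its radius is at least $t_2=\left(\sqrt2+1\right)^4\approx33.97>e^3\approx20.09$. The coefficients are nonzero infinitely often because the radius is finite: there is a genuine singularity at $t_2$, where $E$ has nontrivial monodromy. Applying Proposition \ref{prop: Beukers' irrationality criterion} to $f_0=fE$, $f_1=E$, $\theta_1=-L\left(F,3\right)\in\mathbb Q^\times\zeta\left(3\right)$ then yields that $\zeta\left(3\right)$ is irrational.

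The delicate points are getting the sign $\varepsilon=-1$ so that $D=0$, and establishing the $\operatorname{lcm}^3$ denominator bound in the $t$-expansion. I would check the latter through the explicit Apéry recurrence.
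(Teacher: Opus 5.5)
Your outline follows the paper's proof. You use the same $E$, a scalar multiple of the paper's $F$, the same function $H=E\,(f-L(F,3))$, removability at $t_1$ via $W_6$-invariance, radius at least $t_2=(\sqrt2+1)^4>e^3$, and denominators dividing $\operatorname{lcm}(1,\ldots,n)^3$. That bound follows directly from $a_n\in\mathbb{Z}$, $E\in\mathbb{Z}[[q]]$ and $q(t)\in t\mathbb{Z}[[t]]$, so no recurrence is needed.

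\textbf{The sign step.} The sign bookkeeping you flag as delicate is wrong as written, and the whole radius enlargement rests on it. You dropped the factor $(-1)^{k-1}$ from Proposition~\ref{prop: h formula in terms of d}. With your version $h-D=\varepsilon(-i\tau\sqrt6)^2h(W_6\tau)$ and $\varepsilon=-1$, the product $hE$ would be \emph{anti}-invariant under $W_6$, and hence still branched at $t_1$. The correct relation for $k=4$, $N=6$, $\varepsilon=-1$ is $h(\tau)-D=-6\tau^2h(W_6\tau)$. So $\varepsilon=-1$ does both jobs at once:
\begin{itemize}
\item it forces $L(F,2)=0$ (the factor $1-6^{2-s}-7\cdot2^{2-s}+7\cdot3^{2-s}$ vanishes at $s=2$), hence $D=0$;
\item $h(W_6\tau)=-h(\tau)/(6\tau^2)$ then cancels exactly against $E(W_6\tau)=-6\tau^2E(\tau)$.
\end{itemize}
That your $F=\sum_{d\mid 6}c_dE_4(d\tau)$ has $\varepsilon=-1$ is a one-line check from $E_4(-1/\tau)=\tau^4E_4(\tau)$: it amounts to $d^4c_{6/d}=-36c_d$, which your coefficients satisfy.

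\textbf{The requirement on $F/E^2$.} For the same reason, you must drop the requirement that $F/E^2$ be a polynomial in $t$. Since $F(W_6\tau)=-36\tau^4F(\tau)$ and $E(W_6\tau)^2=36\tau^4E(\tau)^2$, the quotient $F/E^2$ is odd under $W_6$. In fact it is a constant multiple of $t\sqrt{1-34t+t^2}$, not of $t(1-34t+t^2)$. Imposing polynomiality would force $\varepsilon=+1$, the wrong sign. (Also, $E$ satisfies a third-order, not second-order, equation in $t$.)

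\textbf{Infinitely many nonzero coefficients.} Saying ``$E$ has nontrivial monodromy at $t_2$'' does not show that $H$ has infinitely many nonzero coefficients. The change of $H$ around $t_2$ is $E$ times a period polynomial of $f-L(F,3)$, which could a priori vanish. The paper only asserts that the radius equals $t_2$.

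A clean argument is available. Suppose $H$ were a polynomial $P(t)$. Then $f-L(F,3)=P(t)/E$ would be a weight $-2$ form for $\Gamma_1(6)$, holomorphic on $\mathcal{H}$ and meromorphic at the cusps. By Bol's identity, $F=(2\pi i)^{-3}f'''$ would then have zero constant term at every cusp. But the constant term of $E_4(d\tau)$ at the cusp $1/c$ is $(\gcd(c,d)/d)^4$. Hence the constant term of $F$ at $1/2$ is proportional to $1-28+\tfrac{63}{81}-\tfrac{36}{81}\neq0$, a contradiction.
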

\begin{proof}
Define the functions $F$ and $E$ satisfying \begin{align*}
    40F\left(\tau\right)&=E_4\left(\tau\right)-36E_4\left(6\tau\right)-28E_4\left(2\tau\right)+63E_4\left(3\tau\right)\\
    24E\left(\tau\right)&=-5E_2\left(\tau\right)+30E_2\left(6\tau\right)+2E_2\left(2\tau\right)-3E_2\left(3\tau\right)
\end{align*}
The function \(F\) belongs to \(S_4\left(\Gamma_1\left(6\right)\right)\), the complex vector space of cusp forms of weight 4 for the congruence subgroup $\Gamma_1\left(6\right)$, and satisfies the Fricke transformation law
\[F\left(-\frac{1}{6\tau}\right)=-36\tau^4F\left(\tau\right).\]
The Dirichlet series corresponding to $F\left(\tau\right)$ is \begin{align*}
L\left(F,s\right)&=\sum_{n=1}^{\infty}\left[\frac{6\sigma_3\left(n\right)}{n^s}-36\frac{6\sigma_3\left(n\right)}{\left(6n\right)^s}-28\frac{6\sigma_3\left(n\right)}{\left(2n\right)^s}+63\frac{6\sigma_3\left(n\right)}{\left(3n\right)^s}\right]\\
&=6\left(1-6^{2-s}-7\cdot 2^{2-s}+7\cdot 3^{2-s}\right)\zeta\left(s\right)\zeta\left(s-3\right)
\end{align*}
where we used the fact that \[\sum_{n=1}^{\infty}\frac{\sigma_k\left(n\right)}{n^s}=\zeta\left(s\right)\zeta\left(s-k\right)\quad\text{where }\operatorname{Re}\left(s\right)>k+1.\]
One can use the functional equation for the Riemann zeta function \[\zeta\left(s\right)=2^s\pi^{s-1}\sin \left(\frac{\pi s}{2}\right)\Gamma\left(1-s\right)\zeta\left(1-s\right)\]
and the gamma function $\Gamma\left(s+1\right)=s\Gamma\left(s\right)$ to show that $\zeta\left(0\right)=-\frac{1}{2}$. Consequently, \[L\left(F,3\right)=\zeta\left(3\right).\]
Let \(f\) be the Fourier series normalised by
\[f'''\left(\tau\right)=\left(2\pi i\right)^3F\left(\tau\right)\quad\text{where }
f\left(i\infty\right)=0.
\]
It follows from Proposition \ref{prop: h formula in terms of d} that
\begin{align}\label{eqn: transformation of f}
6\tau^2\left(f\left(-\frac{1}{6\tau}\right)-L\left(F,3\right)\right)=-f\left(\tau\right)+L\left(F,3\right)-2\pi i\tau L\left(F,2\right).
\end{align}
One can show that $L\left(F,2\right)=0$, so \[6\tau^2\left(f\left(-\frac{1}{6\tau}\right)-\zeta\left(3\right)\right)=\zeta\left(3\right)-f\left(\tau\right).\]
Set \[H\left(\tau\right)=E\left(\tau\right)\left(f\left(\tau\right)-\zeta\left(3\right)\right).\]
One can show that $E$ satisfies the Fricke transformation law \[E\left(-\frac{1}{6\tau}\right)=-6\tau^2E\left(\tau\right)\quad\text{so}\quad H\left(-\frac{1}{6\tau}\right)=H\left(\tau\right).\]
Recall our formula for $t$ in (\ref{eqn: beukers formula for t}) obtained from the product expansion of $\eta$. Then, $t=q+O\left(q^2\right)$ so $t$ has a local compositional inverse $q=q\left(t\right)\in t\mathbb{Z}\left[\left[t\right]\right]$ near $t=0$. As such, we regard $E,f,H$ as power series in $t$. 

We previously established that the first two positive branching values are $t_1=(\sqrt{2}-1)^4$ and $t_2=(\sqrt{2}+1)^4$. The first branching value corresponds to the fixed point $\tau=\frac{i}{\sqrt{6}}$ of the Fricke involution. The two local branches of the inverse $\tau=\tau\left(t\right)$ near $t=t_1$ are interchanged by $\tau \mapsto -\frac{1}{6\tau}$. Note that the function $H$ takes the same value on these two branches. Hence, the apparent branch singularity at $t=t_1$ is removable for $H$. The next branching value is $t_2$, and therefore the Taylor series of $H\left(t\right)$ about $t=0$ has radius of convergence $\rho=t_2=17+12\sqrt{2}$. In particular, this radius is finite, so $H\left(t\right)$ is not a polynomial and has infinitely many non-zero Taylor coefficients.

It remains to control the denominators. Note that the Fourier coefficients $a_n$ of $F$ are integers. So, consider the Eichler integral of $F$ by \[f\left(\tau\right)=\sum_{n=1}^{\infty}\frac{a_n}{n^3}q^n\]
which has rational coefficients whose $n^\text{th}$ denominator divides $n^3$. Let $D_n=\operatorname{lcm}\left(1,2,\ldots,n\right)$. SInce $q\left(t\right)\in t\mathbb{Z}\left[\left[t\right]\right]$, the coefficient of $t^n$ in $f\left(q\left(t\right)\right)$ is an integral linear combination of $\frac{a_m}{m^3}$, where $1\le m\le n$. Since $m\mid D_n$ for all $m\le n$, then its denominator divides $D_n^3$. 

Likewise, $E\left(q\right)\in \mathbb{Z}\left[\left[q\right]\right]$ so $E\left(t\right)\in\mathbb{Z}\left[\left[t\right]\right]$. Write \[E\left(t\right)f\left(t\right)=\sum_{n=0}^{\infty}A_nt^n\quad\text{and}\quad E\left(t\right)=\sum_{n=0}^{\infty}B_nt^n.\]
So, $A_n\in \frac{1}{D_n^3}\mathbb{Z}$ and $B_n\in\mathbb{Z}$ and \[H\left(t\right)=\sum_{n=0}^{\infty}\left(A_n-\zeta\left(3\right)B_n\right)t^n.\]
Apply Proposition \ref{prop: Beukers' irrationality criterion} with $f_0\left(t\right)=E\left(t\right)f\left(t\right)$, $f_1\left(t\right)=-E\left(t\right)$, and $\theta=\zeta\left(3\right)$. Also, set $d=1$ and $r=3$. Then, $H\left(t\right)$ has infinitely many non-zero coefficients and $\rho=17+12\sqrt{2}>e^3$. This implies that $\zeta\left(3\right)$ is irrational.
\end{proof}

\end{document}